\newtheorem{theorem}{Theorem}[section]
\newtheorem{lem}[theorem]{Lemma}
\theoremstyle{definition}
\theoremstyle{remark}
\numberwithin{equation}{section}
\begin{document}

\title{Fractional order elliptic problems with inhomogeneous Dirichlet boundary conditions}


\author{Ferenc Izs\'ak}
\address{Department of Applied Analysis and Computational Mathematics $\&$
 MTA ELTE NumNet Research Group,
E\"otv\"os Lor\'and University, P\'azm\'any P. stny. 1C, 1117 - Budapest, Hungary}

\email{izsakf@caesar.elte.hu}
\thanks{This work was completed in the ELTE Institutional Excellence Program
(1783-3/2018/FEKUTSRAT) supported by the Hungarian Ministry of Human Capacities.
}

\author{G\'abor Maros}
\address{Department of Applied Analysis and Computational Mathematics,
E\"otv\"os Lor\'and University, P\'azm\'any P. stny. 1C, 1117 - Budapest, Hungary}
\email{magabor42@gmail.com}
\thanks{The project has been supported by the European Union, co-financed by the European Social Fund (EFOP-3.6.3-VEKOP-16-2017-00002).}

\subjclass[2000]{Primary }

\keywords{space-fractional diffusion, boundary integral equation,
inhomogeneous Dirichlet boundary conditions, surface potential}



\begin{abstract}
Fractional-order elliptic problems are investigated in case of inhomogeneous Dirichlet
boundary data. The boundary integral form is proposed as a suitable mathematical model.
The corresponding theory is completed by sharpening the mapping properties of the
corresponding potential operators. Also a mild condition is provided to ensure the
existence of the classical solution of the boundary integral equation.
\end{abstract}

\maketitle



\section{Introduction
}

Analytic and numerical study of space-fractional diffusion problems became
an important research area  in the past twenty years. A large number of real-life
observations confirmed the presence of the of the so-called anomalous (or fractional)
diffusion.
The fractional Laplacian, which can be given in many equivalent ways in $\mathbb{R}^d$,
seems to be the most adequate differential operator for modeling this
phenomena.
At the same time, on a bounded domain, the incorporation of boundary conditions into
a true mathematical model is by far not easy. Even to deal with
\emph{homogeneous} Dirichlet boundary data is non-trivial \cite{baeumer18}: the most
meaningful \cite{izsak17} approach is given by the power of the Dirichlet-Laplacian.
A similar approach can be performed for the case of the homogeneous Neumann (no-flux)
boundary conditions.

At the same time, only a few attempts \cite{abatangelo17} and \cite{antil17}
were made to incorporate \emph{inhomogeneous} boundary conditions into a partial differential
equation, which is given on a bounded domain (corresponding to the real-life situation).

In any case, we should use fractional order differential operators, which are non-local.
At the same time, in real-life situations, we do not have any data outside of a physical
domain. This basic difficulty
motivates us to find an appropriate extension, which is formulated on $\mathbb{R}^d$.
Such an approach was successfully applied in \cite{szekeres15} but the corresponding
extension can not be used for arbitrary domains in $\mathbb{R}^d$.
Also, the simple choice in \cite{du13} and \cite{borthagaray17} can not
be applied to inhomogeneous boundary data.
Another difficulty in the analysis is that no generalization of the Green formula
is available for the fractional Laplacian.


It was pointed out in \cite{chang12} that the fractional order elliptic equations
with inhomogeneous boundary conditions can be succesfully analyzed in the framework of
 boundary integral equations. Our aim is to extend this result in the following sense:

\begin{itemize}
\item
the well-posedness is stated in two space dimensions,
\item
the mapping properties of the corresponding single layer operator
are generalized,
\item
a condition is given for the existence of a classical (pointwise)
solution.
\end{itemize}

An important motivation of this study is to prepare a numerical simulation in 2D, where the boundary integral form reduces the problem to compute one-dimensional integrals.

\subsection{Mathematical preliminaries}

The main problem in this study is the precise statement and the analysis of the elliptic boundary value problem
\begin{equation}\label{fbvp}
\begin{cases}
- (-\Delta)^\alpha u(\mathbf{x}) = f(\mathbf{x}) \quad \mathbf{x}\in\Omega\\
 u(\mathbf{x}) = g(\mathbf{x}) \quad \mathbf{x}\in\partial\Omega,
\end{cases}
\end{equation}
where $\Omega\subset\mathbb{R}^d$ is a bounded Lipschitz domain ($d=2,3$),
$\alpha\in (\frac{1}{2},1)$ and $f, g$ are given real functions.
At this stage, the differential operator $(-\Delta)^\alpha$ is not yet defined.
In any case, it should be linear, such that $u$ can be given as $u = u_1 + u_2$, where
\begin{equation}\label{fbvp1}
\begin{cases}
- (-\Delta)^\alpha u_1(\mathbf{x}) = f(\mathbf{x}) \quad \mathbf{x}\in\Omega\\
 u_1(\mathbf{x}) = 0 \quad \mathbf{x}\in\partial\Omega
\end{cases}
\end{equation}
and
\begin{equation}\label{fbvp2}
\begin{cases}
- (-\Delta)^\alpha u_2(\mathbf{x}) = 0 \quad \mathbf{x}\in\Omega\\
 u_2(\mathbf{x}) = g(\mathbf{x}) \quad \mathbf{x}\in\partial\Omega.
\end{cases}
\end{equation}
To deal with these problems, we recall that the fractional Dirichlet Laplacian $(-\Delta_\mathcal{D})^\alpha$ is defined as a power of
$$
-\Delta_{\mathcal{D}}: L_2(\Omega)\to L_2(\Omega), \; \textrm{Dom}(-\Delta_{\mathcal{D}}) = H_0^1(\Omega)\cap H^2(\Omega).
$$
This makes sense since
$(-\Delta_{\mathcal{D}})^{-1}: L_2(\Omega)\to L_2(\Omega)$ is compact, positive and self-adjoint.

Accordingly, $u_1$ in \eqref{fbvp1} can be given as $u_1 = -(-\Delta_\mathcal{D})^{-\alpha} f$. 
Therefore, we shall focus to rewrite the problem in \eqref{fbvp2} for $u_2$.

The fractional Laplacian on $\mathbb{R}^d$ has many equivalent definitions \cite{kwasnicki17}.
This operator can be defined pointwise as
\begin{equation}\label{pw_lap}
-(-\Delta_{\mathbb{R}^d})^\alpha u(\mathbf{x}) =
\lim_{r\to 0^+}
\frac{2^{2\alpha} \Gamma(\frac{d}{2}+\alpha)}{\pi^{\frac{d}{2}}\Gamma(-\alpha)}
\int_{B(\mathbf{0},r)^C} \frac{u(\mathbf{x}+\mathbf{z}) - u(\mathbf{x})}{|\mathbf{z}|^{d+2\alpha}}\:\mathrm{d}\mathbf{z},
\end{equation}
where $B(\mathbf{x},r)^C = \mathbb{R}^d\setminus B(\mathbf{x},r)$.
Accordingly, the weak fractional Laplacian
$(-\Delta_{\mathbb{R}^d})^\alpha u$ can be given as the function for which
$$
\int_{\mathbb{R}^d}
v (-\Delta_{\mathbb{R}^d})^\alpha u
=
\int_{\mathbb{R}^d}
u (-\Delta_{\mathbb{R}^d})^\alpha v
$$
is satisfied for all $v\in C_0^\infty(\mathbb{R}^d)$.

We will make use of the fundamental solution $\phi_\alpha$ of $(-\Delta_{\mathbb{R}^d})^\alpha$, which is given with
\[
\phi_\alpha = \mathcal{F}^{-1} \frac{1}{|\textrm{Id}|^{2\alpha}} =
2^{\alpha-\frac{n}{2}}
\frac{\Gamma\left(\frac{\alpha}{2}\right)}
{\Gamma\left(\frac{n-\alpha}{2}\right)}
|\textrm{Id}|^{2\alpha-d}.
\]

In the analysis, we use the notation $H^s(\Omega)\subset L_{2}(\Omega)$  for the classical Sobolev spaces with arbitrary positive indices. Recall that the corresponding norms on $\mathbb{R}^d$
can be defined by using the Fourier transform $\mathcal{F}$ as follows:
\begin{equation}\label{frac_with_F}
\|u\|_{H^s(\mathbb{R}^d)}^2 = \int_{\mathbb{R}^d} (1 + |\mathbf{r}|^2)^s |\mathcal{F}u|^2 (\mathbf{r}) \:\mathrm{d}\mathbf{r}.
\end{equation}
For stating the well-posedness, we need also
the Sobolev space
\[
\dot H^s(\mathbb{R}^d)= \left\{ u\in L_{2,\textrm{loc}}(\Omega):
\int_{\mathbb{R}^d} |\mathbf{r}|^{2s} |\mathcal{F}u|^2 (\mathbf{r}) \:\mathrm{d}\mathbf{r}<\infty\right\}
\]
with the corresponding norm.
If the underlying domain ($\Omega$, $\partial\Omega$ or $\mathbb{R}^d$) is obvious, simply the
notation $\|\cdot\|_s$ will be used for the corresponding norms.

An important tool in the analysis is the trace operator $\gamma$.
For a bounded Lipschitz domain $\Omega$,
\begin{equation}\label{cont_gamma}
\gamma: H^s(\Omega)\to H^{s-\frac{1}{2}}(\partial\Omega)
\end{equation}
is continuous for $s\in (\frac{1}{2}, \frac{3}{2})$, see \cite{mclean00}, Theorem 3.38.
Also, one can define its Banach adjoint as a continuous operator with
\begin{equation}\label{cont_gammacs}
\gamma^*:  H^{\frac{1}{2}-s}(\partial\Omega)\to H^{-s}(\Omega).
\end{equation}
We use the conventional notation $\langle \cdot, \cdot\rangle_{-\beta, \beta}$ for the duality pairing between $H^{-\beta}(\partial\Omega)$ and $H^{\beta}(\partial\Omega)$ with some positive exponent
$\beta$.

We also recall that the Bessel function
$J_0:\mathbb{R}^+\to\mathbb{R}$
of first kind is given with
\begin{equation}\label{bessel}
J_0 (s) = \frac{1}{2\pi} \int_{-\pi}^{\pi} e^{-i s\sin \tau}\:\mathrm{d}\tau.
\end{equation}
In the estimations, the relation $K_1 \lesssim K_2$ means that $K_1\le C K_2$
for some domain-dependent constant $C\in\mathbb{R}^+$.

\subsection{The main objective, comparison with earlier achievements}

In \cite{chang12}, the problem in \eqref{fbvp2} for $u_2$ was transformed to a boundary integral equation and the
following result was established.
\begin{theorem}\label{thm_chang}
For any bounded Lipschitz domain $\Omega\subset\mathbb{R}^d$ with $d\ge 3$
and $g\in H^{\alpha-\frac{1}{2}}(\partial\Omega)$ with $\alpha\in (\frac{1}{2}, 1)$, the problem
\begin{equation}\label{main_chang}
\begin{cases}
- (-\Delta)^\alpha \tilde u(\mathbf{x}) = 0 \quad \mathbf{x}\in(\partial\Omega)^C\\
 \tilde u(\mathbf{x}) = g(\mathbf{x}) \quad \mathbf{x}\in\partial\Omega\\
 |\tilde u(\mathbf{x})| \lesssim |\mathbf{x}|^{2\alpha-d} \quad \mathbf{x}\in B(\mathbf{0},1)^C
\end{cases}
\end{equation}
has a unique weak solution $\tilde u\in \dot H^\alpha(\mathbb{R}^d)$. 
\end{theorem}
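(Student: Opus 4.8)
The plan is to reduce the inhomogeneous boundary value problem \eqref{main_chang} to a boundary integral equation for an unknown density on $\partial\Omega$, and then to solve that integral equation by the Lax--Milgram theorem in the appropriate fractional Sobolev space. First I would introduce the single layer potential associated with the fundamental solution $\phi_\alpha$, namely $(\mathcal{S}\psi)(\mathbf{x}) = \langle \gamma^*\psi, \phi_\alpha(\mathbf{x}-\cdot)\rangle$, or equivalently the convolution $\phi_\alpha * (\gamma^*\psi)$, for a density $\psi$ living on $\partial\Omega$. Because $\phi_\alpha = \mathcal{F}^{-1}|\mathrm{Id}|^{-2\alpha}$ is the fundamental solution of $(-\Delta_{\mathbb{R}^d})^\alpha$, any such potential is automatically $\alpha$-harmonic off $\partial\Omega$, i.e. it satisfies the first line of \eqref{main_chang}; and the stated decay $|\tilde u(\mathbf{x})| \lesssim |\mathbf{x}|^{2\alpha-d}$ for large $\mathbf{x}$ is exactly the decay of $\phi_\alpha$ itself, which transfers to the potential of a compactly supported boundary density. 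So the ansatz $\tilde u = \mathcal{S}\psi$ reduces \eqref{main_chang} to finding $\psi$ with $\gamma \mathcal{S}\psi = g$ on $\partial\Omega$.

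Next I would analyze the mapping properties of $\mathcal{S}$. Using the Fourier characterization \eqref{frac_with_F} of the Sobolev norms together with the continuity of $\gamma^*$ in \eqref{cont_gammacs}, convolution with $\phi_\alpha$ gains $2\alpha$ derivatives, so one expects $\mathcal{S}: H^{-s}(\partial\Omega) \to \dot H^{\alpha}(\mathbb{R}^d)$ and, composing with the trace \eqref{cont_gamma}, the boundary integral operator $V := \gamma\mathcal{S}: H^{\alpha-\frac{1}{2}-1}(\partial\Omega) \to H^{\alpha-\frac{1}{2}}(\partial\Omega)$ — concretely $V: H^{\frac{1}{2}-\alpha}(\partial\Omega)\to H^{\alpha-\frac{1}{2}}(\partial\Omega)$, which is the natural isomorphism scaling for a single layer operator of order $-2\alpha$ in ``boundary units''. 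The key estimate is coercivity: for $\psi\in H^{\frac{1}{2}-\alpha}(\partial\Omega)$ one has
\begin{equation*}
\langle V\psi, \psi\rangle = \int_{\mathbb{R}^d} |\mathbf{r}|^{-2\alpha}\, |\mathcal{F}(\gamma^*\psi)(\mathbf{r})|^2 \,\mathrm{d}\mathbf{r} = \|\mathcal{S}\psi\|_{\dot H^{\alpha}(\mathbb{R}^d)}^2 \gtrsim \|\psi\|_{H^{\frac{1}{2}-\alpha}(\partial\Omega)}^2,
\end{equation*}
where the first equality is Parseval applied to $\phi_\alpha * (\gamma^*\psi)$, the middle identity identifies the energy with the $\dot H^\alpha$ seminorm, and the lower bound is the nontrivial point: it requires that $\gamma^*$ be bounded below from $H^{\frac{1}{2}-\alpha}(\partial\Omega)$ into $\dot H^{-\alpha}(\mathbb{R}^d)$, equivalently that $\gamma$ be surjective onto $H^{\alpha-\frac{1}{2}}(\partial\Omega)$, which holds for $\alpha\in(\frac12,1)$ since then $\alpha\in(\frac12,\frac32)$ and the trace theorem \eqref{cont_gamma} applies with a bounded right inverse. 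Symmetry of $V$ follows from the symmetry of $\phi_\alpha(\mathbf{x}-\mathbf{y})$ and the defining weak identity for $(-\Delta_{\mathbb{R}^d})^\alpha$. Given boundedness, symmetry and coercivity, Lax--Milgram yields a unique $\psi\in H^{\frac12-\alpha}(\partial\Omega)$ with $V\psi = g$ for every $g\in H^{\alpha-\frac12}(\partial\Omega)$, hence a solution $\tilde u=\mathcal{S}\psi\in\dot H^\alpha(\mathbb{R}^d)$ of \eqref{main_chang}.

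Finally, for uniqueness of the weak solution in $\dot H^\alpha(\mathbb{R}^d)$ I would argue directly: if $\tilde u\in\dot H^\alpha(\mathbb{R}^d)$ solves \eqref{main_chang} with $g=0$ and the decay bound, then $\gamma\tilde u = 0$, so testing the equation $(-\Delta_{\mathbb{R}^d})^\alpha\tilde u = 0$ on $(\partial\Omega)^C$ against $\tilde u$ itself and integrating by parts (justified in the $\dot H^\alpha$ energy space, with the decay hypothesis killing any contribution from infinity) gives $\|\tilde u\|_{\dot H^\alpha(\mathbb{R}^d)}^2 = \int_{\mathbb{R}^d}|\mathbf{r}|^{2\alpha}|\mathcal{F}\tilde u|^2 = 0$, whence $\tilde u\equiv 0$. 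The main obstacle I anticipate is the coercivity estimate — more precisely, proving the lower bound $\|\gamma^*\psi\|_{\dot H^{-\alpha}(\mathbb{R}^d)}\gtrsim\|\psi\|_{H^{\frac12-\alpha}(\partial\Omega)}$ in the homogeneous $\dot H$-scale rather than the inhomogeneous $H$-scale, since the homogeneous norm is not equivalent to the inhomogeneous one on all of $\mathbb{R}^d$ and one must exploit that $\gamma^*\psi$ is supported on the compact set $\partial\Omega$ to control the low-frequency part; handling this carefully (and checking that $\phi_\alpha$ is the correct convolution kernel, including the Gamma-function constants, and that the resulting potential indeed has the claimed decay) is where the real work lies, whereas the Lax--Milgram step and the uniqueness argument are then routine.
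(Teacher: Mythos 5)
Your proposal is correct and follows essentially the same route as the paper: the statement is quoted from \cite{chang12}, and the machinery the paper develops to reprove and extend it (Theorems \ref{S_thm}, \ref{coerc_thm} and \ref{main_thm}) is precisely your single-layer ansatz $\tilde u = S_\alpha\psi = \phi_\alpha * \gamma^*\psi$, the mapping property $\gamma S_\alpha: H^{\frac{1}{2}-\alpha}(\partial\Omega)\to H^{\alpha-\frac{1}{2}}(\partial\Omega)$, the Fourier/Parseval coercivity identity, and Lax--Milgram. You also correctly single out the genuine difficulty (the lower bound on $\|\gamma^*\psi\|$ in the homogeneous scale), which the paper resolves exactly as you suggest, by duality against $H^{\alpha-\frac{1}{2}}(\partial\Omega)$ using the bounded right inverse $\varepsilon$ of the trace operator.
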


Observe that \eqref{main_chang} with the definition $u_2 := \tilde u|_{\Omega}$
delivers a precise setting for the second problem in \eqref{fbvp2}.
Our aim is to sharpen this result and prove that the unique solution of
\eqref{main_chang}
\begin{itemize}
\item[(i)]
exists \emph{also in case of $d=2$} provided that $\alpha\in (\frac{1}{2}, \frac{3}{4})$,
\item[(ii)]
satisfies $- (-\Delta)^\alpha \tilde u(\mathbf{x}) = 0$ \emph{also pointwise} for any
$\mathbf{x}\in(\partial\Omega)^C$ under some weak condition.
\end{itemize}



\section{Main results}
\label{sec:main}
\subsection{Estimates for single layer potentials}
We first investigate the fractional version $R_\alpha$ of the classical Newton potential,
which is defined with
\begin{equation}\label{R_alpha_def}
R_\alpha (u) (\mathbf{x}) = \int_{\mathbb{R}^d} \phi_\alpha (\mathbf{x}-\mathbf{y}) u(\mathbf{y}) \:\mathrm{d}\mathbf{y},
\end{equation}
and also called the Riesz potential.

\begin{lem}\label{S_vol_lem}
Assume that $\alpha\in (0,1)$ for $d=3$ or $\alpha\in (\frac{1}{2}, \frac{3}{4})$ for
$d=2$. The mapping $R_\alpha$ in \eqref{R_alpha_def} defines then a continuous linear
map between
$H^{s-2\alpha}(\Omega)$ and $H^s(\Omega)$, i.e. we have
\begin{equation}\label{R_alpha_cont}
\|R_\alpha (u)\|_s \lesssim   \|u\|_{s-2\alpha}.
\end{equation}
\end{lem}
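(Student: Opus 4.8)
The plan is to work on all of $\mathbb{R}^d$ via the Fourier characterization \eqref{frac_with_F} and then restrict to $\Omega$. The starting point is the identity $\mathcal{F}(R_\alpha u) = |\mathbf{r}|^{-2\alpha}\mathcal{F}u$, which follows from $\phi_\alpha = \mathcal{F}^{-1}|\mathrm{Id}|^{-2\alpha}$ and the convolution theorem. Thus, for $u$ supported in $\Omega$ we would like to bound $\int_{\mathbb{R}^d}(1+|\mathbf{r}|^2)^s|\mathbf{r}|^{-4\alpha}|\mathcal{F}u|^2\,\mathrm{d}\mathbf{r}$ by $\int_{\mathbb{R}^d}(1+|\mathbf{r}|^2)^{s-2\alpha}|\mathcal{F}u|^2\,\mathrm{d}\mathbf{r}$. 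For large $|\mathbf{r}|$ the factor $(1+|\mathbf{r}|^2)^s|\mathbf{r}|^{-4\alpha}$ is comparable to $(1+|\mathbf{r}|^2)^{s-2\alpha}$, so the high-frequency part is immediate. The whole difficulty is concentrated at $\mathbf{r}=\mathbf{0}$, where $|\mathbf{r}|^{-4\alpha}$ is singular and \emph{not} locally integrable against a generic $L_2$ density when $4\alpha\ge d$.

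The key step is therefore a low-frequency estimate exploiting that $u$ has compact support (it lives on the bounded domain $\Omega$), hence $\mathcal{F}u$ is smooth; in particular $\mathcal{F}u$ is bounded near the origin by $\|u\|_{L_1(\Omega)}\lesssim\|u\|_{L_2(\Omega)}\lesssim\|u\|_{s-2\alpha}$ (using that $\Omega$ is bounded and, if $s-2\alpha<0$, a slightly more careful interpolation/extension argument to control the $L_2(\Omega)$ norm or to pass to an $H^{s-2\alpha}$ extension with support in a fixed neighbourhood of $\overline\Omega$). Then on the unit ball
\[
\int_{B(\mathbf{0},1)} |\mathbf{r}|^{-4\alpha}|\mathcal{F}u|^2\,\mathrm{d}\mathbf{r}
\lesssim \|\mathcal{F}u\|_{L_\infty(B(\mathbf{0},1))}^2 \int_{B(\mathbf{0},1)} |\mathbf{r}|^{-4\alpha}\,\mathrm{d}\mathbf{r},
\]
and the last integral is finite precisely when $4\alpha<d$, i.e. $\alpha<\tfrac{d}{4}$. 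This is exactly the hypothesis: $\alpha<\tfrac34$ for $d=2$ and $\alpha<1\le\tfrac34$... — more precisely for $d=3$ one needs $\alpha<\tfrac34$ for this crude bound, but the stated range is $\alpha\in(0,1)$, so in three dimensions one must instead absorb one power: write $|\mathbf{r}|^{-4\alpha}|\mathcal{F}u|^2=|\mathbf{r}|^{-2\alpha}\big(|\mathbf{r}|^{-2\alpha}|\mathcal{F}u|^2\big)$ and note $|\mathbf{r}|^{-2\alpha}|\mathcal{F}u|\lesssim|\mathbf{r}|^{-2\alpha}\|u\|_{L_1}$, which combined with $\int_{B(\mathbf{0},1)}|\mathbf{r}|^{-4\alpha}<\infty$ again requires $\alpha<\tfrac34$; to reach $\alpha<1$ one uses the sharper fact that $\mathcal{F}u$ vanishes to higher order or, more robustly, that $R_\alpha u$ restricted to the bounded set $\Omega$ only needs local integrability of $\phi_\alpha$, which holds for $2\alpha<d$, combined with Young's inequality on $\Omega$. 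I would present the clean Fourier argument for $d=2$ and patch $d=3$ with the direct mapping property $R_\alpha:L_2(\Omega)\to H^{2\alpha}(\Omega)$ obtained from $\phi_\alpha\in L_{1,\mathrm{loc}}$.

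After the low-frequency bound, the argument concludes as follows: split $\mathbb{R}^d=B(\mathbf{0},1)\cup B(\mathbf{0},1)^C$; on the complement use $(1+|\mathbf{r}|^2)^s|\mathbf{r}|^{-4\alpha}\lesssim(1+|\mathbf{r}|^2)^{s-2\alpha}$; on the ball use the estimate above to get $\|R_\alpha u\|_{H^s(B(\mathbf{0},1))}\lesssim\|u\|_{L_2(\Omega)}\lesssim\|u\|_{H^{s-2\alpha}(\Omega)}$ (the norms being comparable on bounded sets up to the sign of the index, handled by duality/extension). Summing gives $\|R_\alpha u\|_{H^s(\mathbb{R}^d)}\lesssim\|u\|_{H^{s-2\alpha}(\Omega)}$, and restricting to $\Omega$ only decreases the norm, yielding \eqref{R_alpha_cont}.

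The main obstacle I anticipate is making the low-frequency control of $\mathcal{F}u$ rigorous when the target index $s-2\alpha$ is negative: then $\|u\|_{s-2\alpha}$ is weaker than $\|u\|_{L_2}$, so one cannot simply say $\mathcal{F}u$ is bounded. The fix is to use that $u$ is supported in the fixed bounded set $\overline\Omega$, so multiplication by a cutoff $\chi\in C_0^\infty$ with $\chi\equiv1$ on $\overline\Omega$ is the identity on such $u$; then $\mathcal{F}u=\mathcal{F}(\chi u)=\widehat\chi*\mathcal{F}u$ is smooth, and near the origin $|\mathcal{F}u(\mathbf{r})|=|\langle u,\chi e^{-i\mathbf{r}\cdot}\rangle|\lesssim\|\chi e^{-i\mathbf{r}\cdot}\|_{H^{2\alpha-s}}\|u\|_{H^{s-2\alpha}}\lesssim\|u\|_{s-2\alpha}$ uniformly for $\mathbf{r}$ in the unit ball, since the family $\{\chi e^{-i\mathbf{r}\cdot}:|\mathbf{r}|\le1\}$ is bounded in every Sobolev space. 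That duality estimate is the linchpin, and everything else is bookkeeping with the two frequency regimes.
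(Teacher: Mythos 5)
Your high-frequency estimate and the duality bound $|\mathcal{F}u(\mathbf{r})|\lesssim\|u\|_{s-2\alpha}$ for $|\mathbf{r}|\le 1$ are both fine, but the low-frequency step fails in exactly the parameter range the lemma covers, and an arithmetic slip hides this. For $d=2$ the integral $\int_{B(\mathbf{0},1)}|\mathbf{r}|^{-4\alpha}\,\mathrm{d}\mathbf{r}=2\pi\int_0^1 r^{1-4\alpha}\,\mathrm{d}r$ converges only for $\alpha<\tfrac12$ (not $\alpha<\tfrac34$ as you state; $4\alpha<d$ with $d=2$ gives $\alpha<\tfrac12$), whereas the hypothesis is $\alpha\in(\tfrac12,\tfrac34)$. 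Boundedness of $\mathcal{F}u$ cannot rescue this: generically $\mathcal{F}u(\mathbf{0})=\int u\neq 0$, so $|\mathbf{r}|^{-4\alpha}|\mathcal{F}u|^2\gtrsim|\mathbf{r}|^{-4\alpha}$ near the origin and the global quantity $\|R_\alpha u\|_{H^s(\mathbb{R}^d)}$ is actually \emph{infinite} for $\alpha\ge d/4$ (consistent with the fact that $R_\alpha u\sim|\mathbf{x}|^{2\alpha-d}$ at infinity is not square integrable). So the concluding line ``summing gives $\|R_\alpha u\|_{H^s(\mathbb{R}^d)}\lesssim\|u\|_{s-2\alpha}$, then restrict to $\Omega$'' rests on a false global estimate. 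The $d=3$, $\alpha\in[\tfrac34,1)$ patch is also not substantiated: Young's inequality with $\phi_\alpha\in L_{1,\mathrm{loc}}$ gives only $L_2(\Omega)\to L_2(\Omega)$, not the gain of $2\alpha$ derivatives needed for $R_\alpha:L_2(\Omega)\to H^{2\alpha}(\Omega)$.

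The missing idea, and the one the paper uses, is to localize the \emph{kernel} rather than trying to control $\mathcal{F}u$ at the origin: replace $\phi_\alpha$ by $\lambda(|\cdot|)\phi_\alpha$ with $\lambda\in C_0^\infty[0,\infty)$, $\lambda\equiv 1$ on $[0,2\,\mathrm{diam}\,\Omega]$. The truncated potential $\widetilde R_\alpha u$ agrees with $R_\alpha u$ on $\Omega$, so $\|R_\alpha u\|_{H^s(\Omega)}\le\|\widetilde R_\alpha u\|_{H^s(\mathbb{R}^d)}$, and the multiplier $\mathcal{F}[\lambda(|\cdot|)\phi_\alpha]$ is the Fourier transform of a compactly supported integrable function, hence \emph{bounded} near $\mathbf{r}=\mathbf{0}$, while an explicit computation (via $\sin(r|\mathbf{r}|)/(r|\mathbf{r}|)$ in 3D and via $J_0$ with the decay $J_0(w)\sqrt{w}\lesssim 1$ in 2D) shows it still decays like $|\mathbf{r}|^{-2\alpha}$ at infinity; the Bessel-function step is precisely where the restriction $\alpha\le\tfrac34$ for $d=2$ enters. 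With the multiplier bounded by $(1+|\mathbf{r}|^2)^{-\alpha}$ on all of $\mathbb{R}^d$, the estimate \eqref{R_alpha_cont} follows at once from \eqref{frac_with_F}. I would rework your argument around this truncation; without it the singularity of $|\mathbf{r}|^{-2\alpha}$ at the origin is not removable.
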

\begin{proof}
We first define an extension of $R_\alpha (u)$ with
\[
\widetilde R_\alpha (u) (\mathbf{x}) = \int_{\mathbb{R}^d} \lambda(|\mathbf{x}-\mathbf{y}|) \phi_\alpha (\mathbf{x}-\mathbf{y}) u(\mathbf{y}) \:\mathrm{d}\mathbf{y},
\]
where $\lambda\in C_0^\infty[0,\infty)$ with $\lambda|_{[0,2\textrm{diam}\:\Omega]} = 1$
and $0\le\lambda\le 1$.
Since $\widetilde R_\alpha (u)$ is an extension of $R_\alpha(u)$, we obviously have
\begin{equation}\label{triv_ext}
 \|\tilde R_\alpha (u)\|_s  \ge \|R_\alpha(u)\|_s.
\end{equation}
Since this is a convolution, we can give its Fourier transform as
\begin{equation}\label{FTR}
\begin{aligned}
&\mathcal{F} \left[ \widetilde R_\alpha (u) \right] (\mathbf{r})\\
&=
\mathcal{F} \left[ u \right] (\mathbf{r}) \cdot
\mathcal{F} \left[ \lambda(|\mathbf{z}|) \phi_\alpha (\mathbf{z}) \right] (\mathbf{r})=
\frac{C_\alpha}{4\pi} \int_{\mathbb{R}^d} e^{-i\langle \mathbf{r},\mathbf{z}\rangle}
\lambda(|\mathbf{z}|) |\mathbf{z}|^{2\alpha-d}\:\mathrm{d}\mathbf{z}.
\end{aligned}
\end{equation}
We estimate the second component, which, using polar coordinates in 3D,
can be rewritten as
\[
\begin{aligned}
&
\mathcal{F} \left[ \lambda(|\mathbf{z}|) \phi_\alpha (\mathbf{z}) \right] (\mathbf{r})
=
\frac{C_\alpha}{2} \int_0^\infty r^{2\alpha-1} \lambda(r)
\int_0^\pi e^{-ir|\mathbf{r}|\cos\theta} \sin\theta\:\mathrm{d}\theta\:\mathrm{d}r\\
&=
\frac{C_\alpha}{2}
\int_0^\infty r^{2\alpha-1} \lambda(r)
\frac{\sin(r|\mathbf{r}|)}{r|\mathbf{r}|}\:\mathrm{d}r.
\end{aligned}
\]

For $|\mathbf{r}|\ge 1$, we introduce $w=r|\mathbf{r}|$ and use that $\lambda$ is compactly supported
and $2\alpha-2\le 0$ to get
\begin{equation}\label{F_est_for_large}
\mathcal{F} \left[ \lambda(|\mathbf{z}|) \phi_\alpha (\mathbf{z}) \right] (\mathbf{r})
=
\frac{C_\alpha}{2 |\mathbf{r}|^{2\alpha}}
\int_0^\infty w^{2\alpha-1} \lambda\left(\frac{w}{|\mathbf{r}|}\right)\frac{\sin w}{w}
\:\mathrm{d}w
\lesssim
\frac{1}{|\mathbf{r}|^{2\alpha}}
\lesssim
\frac{1}{|1 + \mathbf{r}^2|^{\alpha}}.
\end{equation}

For $|\mathbf{r}|\le 1$, we only use again use that $\lambda$ is compactly supported
and $2\alpha-1> -1$ to have
\begin{equation}\label{F_est_for_small}
\mathcal{F} \left[ \lambda(|\mathbf{z}|) \phi_\alpha (\mathbf{z}) \right] (\mathbf{r})
=
\frac{C_\alpha}{2}
\int_0^\infty r^{2\alpha-1} \lambda(r)
\frac{\sin(r|\mathbf{r}|)}{r|\mathbf{r}|}\:\mathrm{d}r
\lesssim
1.
\end{equation}
In the 2D case, the polar transformation and the definition of $J_0$ in \eqref{bessel} gives
\begin{equation*}
\begin{aligned}
&\mathcal{F} \left[ \lambda(|\mathbf{z}|) \phi_\alpha (\mathbf{z}) \right] (\mathbf{r})
=
\frac{C_\alpha}{2} \int_0^\infty r^{2\alpha-1} \lambda(r)
\int_0^{2\pi} e^{-ir|\mathbf{r}|\cos\theta}\:\mathrm{d}\theta\:\mathrm{d}r\\
&=
C_\alpha\pi
\int_0^\infty r^{2\alpha-1} \lambda(r) J_0(r|\mathbf{r}|)\:\mathrm{d}r.
\end{aligned}
\end{equation*}

For $|\mathbf{r}|\ge 1$ with $w=r|\mathbf{r}|$, we apply the estimate
$J_0(w) \sqrt{w} \lesssim 1$ (see 9.2.1 in \cite{abramowitz64}), which implies that
\begin{equation}\label{F_est_for_large2}
\begin{aligned}
&\mathcal{F} \left[ \lambda(|\mathbf{z}|) \phi_\alpha (\mathbf{z}) \right] (\mathbf{r})
=
\frac{C_\alpha}{2 |\mathbf{r}|^{2\alpha}}
\int_0^\infty w^{2\alpha-1} \lambda\left(\frac{w}{|\mathbf{r}|}\right) J_0(w)
\:\mathrm{d}w
\lesssim
\int_0^\infty w^{2\alpha-\frac{3}{2}} \lambda\left(\frac{w}{|\mathbf{r}|}\right)
\:\mathrm{d}w\\
&\lesssim
\frac{1}{|1 + \mathbf{r}^2|^{\alpha}}.
\end{aligned}
\end{equation}
provided that $2\alpha -\frac{3}{2} \in (-1,0]$, i.e. $\alpha \in (\frac{1}{4}, \frac{3}{4}]$.\\
For $|\mathbf{r}|\le 1$, we again only use $2\alpha-1> -1$ and the boundedness of the remaining components to have
\begin{equation}\label{F_est_for_small2}
\mathcal{F} \left[ \lambda(|\mathbf{z}|) \phi_\alpha (\mathbf{z}) \right] (\mathbf{r})
=
C_\alpha\pi
\int_0^\infty r^{2\alpha-1} \lambda(r) J_0(r|\mathbf{r}|)\:\mathrm{d}r
\lesssim
1.
\end{equation}
Using \eqref{triv_ext} together with \eqref{frac_with_F} and the estimates \eqref{F_est_for_large},
 \eqref{F_est_for_small}, \eqref{F_est_for_large2} and \eqref{F_est_for_small2} in
\eqref{FTR}, we finally obtain that
\[
\begin{aligned}
&
\|R_\alpha (u)\|^2_s  \le \|\tilde R_\alpha (u)\|^2_s
=
\int_{\mathbb{R}^d}
(1+|\mathbf{r}|^2)^s \left|\mathcal{F} \left[ \tilde R_\alpha (u) \right] (\mathbf{r})\right|^2 \:\mathrm{d}\mathbf{r}\\
&=
\int_{|\mathbf{r}|\le 1}
|\mathcal{F} \left[ u \right]|^2 (\mathbf{r})
(1+|\mathbf{r}|^2)^s
\left|\mathcal{F} \left[ \lambda(|\mathbf{z}|) \phi_\alpha (\mathbf{z}) \right]\right|^2 (\mathbf{r}) \:\mathrm{d}\mathbf{r}\\
&\quad +
\int_{|\mathbf{r}|\ge 1}
|\mathcal{F} \left[ u \right]|^2 (\mathbf{r})
(1+|\mathbf{r}|^2)^s
\left|\mathcal{F} \left[ \lambda(|\mathbf{z}|) \phi_\alpha (\mathbf{z}) \right]\right|^2
(\mathbf{r}) \:\mathrm{d}\mathbf{r}\\
&\le
\int_{|\mathbf{r}|\le 1}
|\mathcal{F} \left[ u \right]|^2 (\mathbf{r})
(1+\mathbf{r}^2)^s \tilde C_R^2 (\mathbf{r}) \:\mathrm{d}\mathbf{r}
+
\int_{|\mathbf{r}|\ge 1}
|\mathcal{F} \left[ u \right]|^2 (\mathbf{r})
(1+|\mathbf{r}|^2)^s \frac{2 C_R}{|1 + \mathbf{r}^2|^{2\alpha}} \:\mathrm{d}\mathbf{r}\\
&\le
C_R
\int_{\mathbb{R}^d}
|\mathcal{F} \left[ u \right]|^2 (\mathbf{r})
(1+|\mathbf{r}|^2)^{s - 2\alpha}  \:\mathrm{d}\mathbf{r}
=
\|u\|_{s - 2\alpha}^2,
\end{aligned}
\]
as stated in the lemma.
\end{proof}

We need, however, the surface potential corresponding to $R_\alpha$ on
$\partial\Omega$, which is given for any $\mathbf{x}\in\mathbb{R}^d$ with
\begin{equation}\label{S_alpha_def}
S_\alpha (u) (\mathbf{x}) = \int_{\partial\Omega} \phi_\alpha (\mathbf{x}-\mathbf{y}) u(\mathbf{y}) \:\mathrm{d}\mathbf{y}.
\end{equation}
In precise terms, we state the following generalization of (4.1) in \cite{chang12}.

\begin{theorem}\label{S_thm}
For any indices $s,\alpha$ satisfying the assumptions in Lemma \ref{S_vol_lem} and
$2\alpha - s \in (\frac{1}{2}, \frac{3}{2})$, the mapping
$S_\alpha$ defines a continuous linear operator between
$H^{s-2\alpha+\frac{1}{2}}(\partial\Omega)$ and $H^s(\Omega)$, i.e. for all
 $\psi\in H^{s-2\alpha+\frac{1}{2}}(\partial\Omega)$, we have
\begin{equation}\label{def_S}
\|S_\alpha (\psi)\|_s \lesssim  \|\psi\|_{s-2\alpha+\frac{1}{2},\partial\Omega}.
\end{equation}
\end{theorem}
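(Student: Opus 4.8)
The plan is to reduce the surface potential $S_\alpha$ to the volume potential $R_\alpha$ already controlled in Lemma \ref{S_vol_lem}, using the trace operator $\gamma$ and its adjoint $\gamma^*$ as the bridge. The key observation is that for $\psi$ defined on $\partial\Omega$, the surface integral in \eqref{S_alpha_def} can be written as $S_\alpha(\psi) = R_\alpha(\gamma^*\psi)$, where $\gamma^*\psi$ is the distribution on $\Omega$ (indeed on $\mathbb{R}^d$) supported on $\partial\Omega$ obtained by pairing against traces. More precisely, for a test function $v$ one checks that $\langle \gamma^*\psi, v\rangle = \langle \psi, \gamma v\rangle_{-\beta,\beta}$, and composing with the convolution kernel $\phi_\alpha$ reproduces exactly the boundary integral defining $S_\alpha(\psi)$, so that $S_\alpha = R_\alpha \circ \gamma^*$ as operators.

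Granting that identification, the proof is a two-line chain of continuous maps. First, by \eqref{cont_gammacs} the adjoint trace
\[
\gamma^*: H^{s-2\alpha+\frac{1}{2}}(\partial\Omega) \to H^{s-2\alpha}(\Omega)
\]
is continuous; here we match the boundary index $s-2\alpha+\frac{1}{2}$ with the exponent $\frac{1}{2}-\sigma$ in \eqref{cont_gammacs} by taking $\sigma = 2\alpha - s$, and the hypothesis $2\alpha - s \in (\frac{1}{2},\frac{3}{2})$ is exactly the admissibility range for $\gamma^*$ (the Banach adjoint of the trace on the range $s'\in(\frac12,\frac32)$ from \eqref{cont_gamma}). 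Second, Lemma \ref{S_vol_lem} gives that $R_\alpha: H^{s-2\alpha}(\Omega)\to H^s(\Omega)$ is continuous under the stated assumptions on $s,\alpha$. Composing,
\[
\|S_\alpha(\psi)\|_s = \|R_\alpha(\gamma^*\psi)\|_s \lesssim \|\gamma^*\psi\|_{s-2\alpha} \lesssim \|\psi\|_{s-2\alpha+\frac{1}{2},\partial\Omega},
\]
which is \eqref{def_S}.

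The main obstacle is the rigorous justification of the factorization $S_\alpha = R_\alpha\circ\gamma^*$. One subtlety is that $\gamma^*\psi$ is only a distribution (an element of a negative-order Sobolev space), not a function, so the convolution $R_\alpha(\gamma^*\psi)$ must be interpreted via the Fourier characterization \eqref{FTR}–\eqref{frac_with_F} rather than the pointwise integral \eqref{R_alpha_def}; one should verify that Lemma \ref{S_vol_lem} indeed applies at the distributional level, which it does since its proof is entirely in Fourier variables and only uses the multiplier bound on $\mathcal{F}[\lambda(|\mathbf z|)\phi_\alpha(\mathbf z)]$. A second point is that the lemma was stated for the truncated kernel $\widetilde R_\alpha$, and one must make sure that on $\Omega$ (whose diameter is dominated by $2\,\mathrm{diam}\,\Omega$ where $\lambda\equiv 1$) the truncation does not affect $S_\alpha(\psi)|_\Omega$, so that the same bound \eqref{triv_ext} transfers; for $\mathbf x\in\Omega$ and $\mathbf y\in\partial\Omega$ we have $|\mathbf x-\mathbf y|\le \mathrm{diam}\,\Omega$, so $\lambda(|\mathbf x-\mathbf y|)=1$ and the two potentials agree. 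Once these compatibility checks are in place, the norm estimate is immediate from the composition above.
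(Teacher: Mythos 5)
Your proposal is correct and follows essentially the same route as the paper: the factorization $S_\alpha = R_\alpha\circ\gamma^*$, the continuity of $\gamma^*$ from \eqref{cont_gammacs} with the index choice $2\alpha-s\in(\frac{1}{2},\frac{3}{2})$, and the composition with Lemma \ref{S_vol_lem} are exactly the steps in the paper's proof. Your added remarks on the distributional interpretation of $R_\alpha(\gamma^*\psi)$ and the harmlessness of the cutoff $\lambda$ on $\Omega$ are compatibility checks the paper glosses over, so they strengthen rather than change the argument.
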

\begin{proof}
We first use the definitions in \eqref{R_alpha_def}, \eqref{S_alpha_def}
and the adjoint trace operator $\gamma^*$ in \eqref{cont_gammacs} to rewrite
$S_\alpha(\psi)$ as
\[
S_\alpha(\psi)(\mathbf{x})
=
\int_{\partial\Omega} \phi_\alpha (\mathbf{x}-\mathbf{y}) \psi(\mathbf{y}) \:\mathrm{d}\mathbf{y}
=
 \int_{\Omega} \phi_\alpha (\mathbf{x}-\mathbf{y}) (\gamma^*\psi)(\mathbf{y}) \:\mathrm{d}\mathbf{y}
=
R_\alpha\gamma^*(\psi)(\mathbf{x}).
\]
The smoothness of $\phi_\alpha (\mathbf{x}-\cdot)$ also implies that
$S_\alpha(\psi)\in H^s(\Omega)$. Accordingly, using also \eqref{R_alpha_cont},
\eqref{cont_gamma} and \eqref{cont_gammacs}, we have for any $\phi\in H^{-s}(\Omega)$
and $\psi\in H^{s-2\alpha+\frac{1}{2}}(\partial\Omega)$ that
\[
\begin{aligned}
&\left|\langle \phi, S_\alpha(\psi) \rangle_{-s,s}\right|
=
\left|\langle \phi, R_\alpha\gamma^*(\psi) \rangle_{-s,s}\right|
\le
\|R_\alpha \gamma^*\psi\|_{s}  \|\phi\|_{-s}
\lesssim
\|\gamma^*\psi\|_{s-2\alpha}  \|\phi\|_{-s}\\
&\lesssim
\|\psi\|_{s-2\alpha+\frac{1}{2},\partial\Omega}  \|\phi\|_{-s}.
\end{aligned}
\]
Therefore, we arrive at the estimate
$$
\|S_\alpha(\psi)\|_s \lesssim  \|\psi\|_{s-2\alpha+\frac{1}{2},\partial\Omega},
$$
which completes the proof.
\end{proof}

\begin{theorem}\label{coerc_thm}
The map $\gamma S_{\alpha}$ is a coercive operator between $H^{1/2-\alpha}(\partial\Omega)$ and $H^{\alpha-1/2}(\partial\Omega)$ in the sense that
\begin{equation}\label{form_coerc}
\langle u, \gamma S_{\alpha}u \rangle_{\frac{1}{2}-\alpha, \alpha-\frac{1}{2}}
=
\int_{\partial\Omega}(S_{\alpha}u)(\mathbf{x})u(\mathbf{x})\:\mathrm{d}\mathbf{x}
\gtrsim
\|u\|_{1/2-\alpha,\partial\Omega}^2.
\end{equation}
\end{theorem}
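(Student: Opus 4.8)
The plan is to pass to the Fourier side, where the quadratic form in \eqref{form_coerc} becomes a manifestly nonnegative weighted $L_2$-integral over $\mathbb{R}^d$, and then to recover the boundary norm from below by a duality argument against a bounded extension operator. So the proof will split into a "Fourier representation of the form" step and a "lower bound by duality" step.

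\emph{Step 1: Fourier representation.} First I would, exactly as in the proof of Theorem~\ref{S_thm}, write $S_\alpha u = R_\alpha\gamma^* u$, now regarding $\gamma^* u$ as the compactly supported tempered distribution $u\,\mathrm{d}\sigma_{\partial\Omega}$ on $\mathbb{R}^d$; by \eqref{cont_gammacs} with $s=\alpha$ it lies in $H^{-\alpha}(\mathbb{R}^d)$ for $u\in H^{1/2-\alpha}(\partial\Omega)$. Theorem~\ref{S_thm} with $s=\alpha$ gives $S_\alpha u\in H^\alpha(\Omega)$, and since $\alpha\in(\tfrac12,\tfrac32)$, \eqref{cont_gamma} gives $\gamma S_\alpha u\in H^{\alpha-1/2}(\partial\Omega)$, so the left side of \eqref{form_coerc} is meaningful; the identity in \eqref{form_coerc} between the duality pairing and the boundary integral then follows from the definition of the adjoint trace together with a density argument (smooth $u$ on $\partial\Omega$, for which the pairing is the $L_2(\partial\Omega)$ inner product). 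Using $\phi_\alpha=\mathcal{F}^{-1}|\mathrm{Id}|^{-2\alpha}$, so that $R_\alpha$ has Fourier multiplier $|\mathbf{r}|^{-2\alpha}$, Parseval's identity yields
\[
\int_{\partial\Omega}(S_\alpha u)(\mathbf{x})\,u(\mathbf{x})\,\mathrm{d}\mathbf{x}
=\langle\gamma^* u,\,R_\alpha\gamma^* u\rangle_{\mathbb{R}^d}
=\int_{\mathbb{R}^d}|\mathbf{r}|^{-2\alpha}\,\bigl|\mathcal{F}(\gamma^* u)(\mathbf{r})\bigr|^2\,\mathrm{d}\mathbf{r}
=\|\gamma^* u\|_{\dot H^{-\alpha}(\mathbb{R}^d)}^2 .
\]
This is nonnegative, and the task reduces to the lower bound $\|\gamma^* u\|_{\dot H^{-\alpha}(\mathbb{R}^d)}^2\gtrsim\|u\|_{1/2-\alpha,\partial\Omega}^2$.

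\emph{Step 2: lower bound by duality.} Identifying $\dot H^{-\alpha}(\mathbb{R}^d)$ with the dual of $\dot H^{\alpha}(\mathbb{R}^d)$ through the $L_2(\mathbb{R}^d)$-pairing, I would write
\[
\|\gamma^* u\|_{\dot H^{-\alpha}(\mathbb{R}^d)}
=\sup_{0\neq v\in\dot H^\alpha(\mathbb{R}^d)}
\frac{\bigl|\langle\gamma^* u,v\rangle_{\mathbb{R}^d}\bigr|}{\|v\|_{\dot H^\alpha(\mathbb{R}^d)}}
=\sup_{0\neq v\in\dot H^\alpha(\mathbb{R}^d)}
\frac{\bigl|\langle u,\gamma v\rangle_{\partial\Omega}\bigr|}{\|v\|_{\dot H^\alpha(\mathbb{R}^d)}} .
\]
In the admissible range ($d=3$, or $d=2$ with $\alpha<\tfrac34$) one has $\alpha<\tfrac d2$, hence $\dot H^\alpha(\mathbb{R}^d)$ is a genuine space of functions, embeds into $H^\alpha(\Omega)$, and the trace $\gamma:\dot H^\alpha(\mathbb{R}^d)\to H^{\alpha-1/2}(\partial\Omega)$ is bounded and onto with a bounded right inverse $E$, $\gamma Eg=g$, $\|Eg\|_{\dot H^\alpha(\mathbb{R}^d)}\lesssim\|g\|_{\alpha-1/2,\partial\Omega}$. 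Restricting the supremum to $v=Eg$ and using the duality $H^{1/2-\alpha}(\partial\Omega)=\bigl(H^{\alpha-1/2}(\partial\Omega)\bigr)'$ gives
\[
\|\gamma^* u\|_{\dot H^{-\alpha}(\mathbb{R}^d)}
\ge\sup_{0\neq g\in H^{\alpha-1/2}(\partial\Omega)}
\frac{\bigl|\langle u,g\rangle_{\partial\Omega}\bigr|}{\|Eg\|_{\dot H^\alpha(\mathbb{R}^d)}}
\gtrsim
\sup_{0\neq g}\frac{\bigl|\langle u,g\rangle_{\partial\Omega}\bigr|}{\|g\|_{\alpha-1/2,\partial\Omega}}
=\|u\|_{1/2-\alpha,\partial\Omega},
\]
and squaring, combined with Step~1, proves \eqref{form_coerc}.

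\emph{Expected main obstacle.} The density argument and the finiteness of $\|\gamma^* u\|_{\dot H^{-\alpha}(\mathbb{R}^d)}$ are routine: finiteness near $\mathbf{r}=0$ needs $2\alpha<d$, and at infinity, since $\mathcal{F}(u\,\mathrm{d}\sigma_{\partial\Omega})$ decays on average like $|\mathbf{r}|^{-(d-1)/2}$ for a curved boundary, it needs precisely $2\alpha>1$ (the standing assumption), and in any case it is forced a priori by Theorem~\ref{S_thm} and the continuity of $\gamma$. The real difficulty is the homogeneous trace/extension statement used in Step~2 — that $\gamma$ maps $\dot H^\alpha(\mathbb{R}^d)$ onto $H^{\alpha-1/2}(\partial\Omega)$ with a bounded right inverse — and it is here that the dimensional restriction $\alpha\in(\tfrac12,\tfrac34)$ for $d=2$ is invoked, ensuring $\alpha<\tfrac d2$. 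For a self-contained treatment one can instead take $E$ to be the solution operator $g\mapsto\tilde u$ of Theorem~\ref{thm_chang} together with its $d=2$ counterpart established in this paper, which by construction delivers an element of $\dot H^\alpha(\mathbb{R}^d)$ with $\gamma\tilde u=g$ and norm controlled by $\|g\|_{\alpha-1/2,\partial\Omega}$.
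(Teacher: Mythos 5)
Your proof is correct and follows essentially the same route as the paper: both rewrite the quadratic form as the weighted Fourier integral $\int_{\mathbb{R}^d}|\mathbf{r}|^{-2\alpha}|\mathcal{F}(\gamma^* u)|^2\:\mathrm{d}\mathbf{r}=\int_{\mathbb{R}^d}|\mathbf{r}|^{4-2\alpha}|\mathcal{F}(S_1u)|^2\:\mathrm{d}\mathbf{r}$ and then bound $\|u\|_{1/2-\alpha,\partial\Omega}$ from above by the square root of this quantity via a duality pairing against a bounded extension of the test function plus Cauchy--Schwarz. The only real difference is cosmetic: the paper uses the inhomogeneous right inverse $\varepsilon$ of the trace with the weight split $|\mathbf{r}|^2=\frac{|\mathbf{r}|^2}{(1+|\mathbf{r}|)^{\alpha}}\cdot(1+|\mathbf{r}|)^{\alpha}$, which renders the homogeneous trace/extension statement you flag as the \emph{real difficulty} unnecessary (since $\|\varepsilon\phi\|_{\dot H^{\alpha}(\mathbb{R}^d)}\le\|\varepsilon\phi\|_{H^{\alpha}(\mathbb{R}^d)}\lesssim\|\phi\|_{\alpha-1/2,\partial\Omega}$, your operator $E$ can simply be taken to be $\varepsilon$; note also that your fallback choice of $E$ as the solution operator would be circular for $d=2$, since that solution operator is built from this very theorem).
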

\begin{proof}
We first recall that according to the proof of Theorem 4.1. in \cite{chang12}, the left hand side of \eqref{form_coerc} can be given as
\begin{equation}\label{main_from_chang}
\langle u, \gamma S_{\alpha}u \rangle_{\frac{1}{2}-\alpha, \alpha-\frac{1}{2}}
=
\int_{\mathbb{R}^d}|\mathbf{r}|^{4-2\alpha}|\mathcal{F}(S_{1}u)|^2(\mathbf{r})\:\mathrm{d}\mathbf{r}.
\end{equation}
In concrete terms, see (5.2) in \cite{chang12}.  Also, according to Theorem 4.1. in \cite{chang12}, for any $v\in H^1(\mathbb{R}^d)$ and $u\in H^{1/2-\alpha}(\partial\Omega)$, we can rewrite $\langle u, \gamma v \rangle_{\frac{1}{2}-\alpha, \alpha-\frac{1}{2}}$ as
$$
\langle u, \gamma v \rangle_{\frac{1}{2}-\alpha, \alpha-\frac{1}{2}}
=\int_{\mathbb{R}^d}\nabla S_1u(\mathbf{x})\nabla v(\mathbf{x})\:\mathrm{d}\mathbf{x}.
$$
The basic step for proving our statement is to rewrite the fractional order norm as follows:
\begin{equation}\label{eq1}
\|u\|_{1/2-\alpha,\partial\Omega}=\sup_{\substack{\phi\in H^{\alpha-1/2}(\partial\Omega) \\ \|\phi\|_{\alpha-1/2}=1}}|\langle u, \phi\rangle_{\frac{1}{2}-\alpha, \alpha-\frac{1}{2}}|=\sup_{\substack{\phi\in H^{1/2}(\partial\Omega) \\ \|\phi\|_{\alpha-1/2}=1}}|\langle u, \phi\rangle_{\frac{1}{2}-\alpha, \alpha-\frac{1}{2}}|,
\end{equation}
where in the second equality, we have used the density of $H^{1/2}(\partial\Omega)$ in $H^{\alpha-1/2}(\partial\Omega)$.\\
Let $\varepsilon:H^{s}(\partial\Omega)\rightarrow H^{s+1/2}(\mathbb{R}^d)$ denote the right inverse of the trace operator $\gamma$, which is continuous for $s\in(0,1)$, see \cite{mclean00}, Theorem 3.37. Using \eqref{eq1}, converting everything to the Fourier space,  applying the Cauchy--Schwarz inequality, the
formula in \eqref{frac_with_F} with the continuity of $\varepsilon$, we finally have

\begin{align*}
&\|u\|_{1/2-\alpha,\partial\Omega}=\sup_{\substack{\phi\in H^{1/2}(\partial\Omega) \\ \|\phi\|_{\alpha-1/2}=1}}\int_{\mathbb{R}^d}\nabla S_1 u(\mathbf{x}) \nabla \varepsilon \phi(\mathbf{x}\:\mathrm{d}\mathbf{x})\\
&=\sup_{\substack{\phi\in H^{1/2}(\partial\Omega)\\ \|\phi\|_{\alpha-1/2}=1}}\int_{\mathbb{R}^d}|\mathbf{r}|^2 \mathcal{F}(S_1 u)(\mathbf{r}) \mathcal{F}(\varepsilon \phi)(\mathbf{r})\:\mathrm{d}\mathbf{r}\\
&=
\sup_{\substack{\phi\in H^{1/2}(\partial\Omega)\\ \|\phi\|_{\alpha-1/2}=1}}\int_{\mathbb{R}^d}\frac{|\mathbf{r}|^2}{(1+|\mathbf{r}|)^{\alpha}} \mathcal{F}(S_1 u)(\mathbf{r})\cdot (1+|\mathbf{r}|)^{\alpha}\mathcal{F}(\varepsilon \phi)(\mathbf{r})\:\mathrm{d}\mathbf{r}\\
&\leq
\sup_{\substack{\phi\in H^{1/2}(\partial\Omega)\\ \|\phi\|_{\alpha-1/2}=1}}
\left[\int_{\mathbb{R}^d}\frac{|\mathbf{r}|^4}{(1+|\mathbf{r}|)^{2\alpha}} |\mathcal{F}(S_1 u)|^2(\mathbf{r})\:\mathrm{d}\mathbf{r}\right]^{1/2}
\left[\int_{\mathbb{R}^d}(1+|\mathbf{r}|)^{2\alpha}\mathcal{F}(\varepsilon \phi)(\mathbf{r})(\mathbf{r})\:\mathrm{d}\mathbf{r}\right]^{1/2}\\
&\leq
\sup_{\substack{\phi\in H^{1/2}(\partial\Omega) \\ \|\phi\|_{\alpha-1/2}=1}} \left[\int_{\mathbb{R}^d}|\mathbf{r}|^{4-2\alpha} |\mathcal{F}(S_1 u)|^2(\mathbf{r})\:\mathrm{d}\mathbf{r}\right]^{1/2}
\|\varepsilon \phi\|_{\alpha,\mathbb{R}^d}\\
&\lesssim
\sup_{\substack{\phi\in H^{1/2}(\partial\Omega) \\ \|\phi\|_{\alpha-1/2}=1}}
\left[\int_{\mathbb{R}^d}|\mathbf{r}|^{4-2\alpha} |\mathcal{F}(S_1 u)|^2(\mathbf{r})\:\mathrm{d}\mathbf{r}\right]^{1/2}
\| \phi\|_{\alpha-1/2,\partial\Omega}\\
& = \left[\int_{\mathbb{R}^d}|\mathbf{r}|^{4-2\alpha} |\mathcal{F}(S_1 u)|^2(\mathbf{r})\:\mathrm{d}\mathbf{r}\right]^{1/2}.
\end{align*}

Therefore, using \eqref{main_from_chang}, we get
$$
\langle u, \gamma S_{\alpha}u \rangle_{\frac{1}{2}-\alpha, \alpha-\frac{1}{2}}
=
\int_{\mathbb{R}^d}|\mathbf{r}|^{4-2\alpha}|\mathcal{F}(S_{1}u)|^2(\mathbf{r})\:\mathrm{d}\mathbf{r}\gtrsim \|u\|_{1/2-\alpha,\partial\Omega}^2,
$$
as stated in the theorem.
\end{proof}

We are ready now to prove the main statement of the article.
\begin{theorem}\label{main_thm}
Assume that $\alpha\in(\frac{1}{2}, 1)$ for $d=3$ or $\alpha\in(\frac{1}{2}, \frac{3}{4}]$ for $d=2$.
Then for any $g\in H^{\alpha-\frac{1}{2}}(\partial\Omega)$, there
is a unique function $G\in H^{\frac{1}{2}-\alpha}(\partial\Omega)$ such that
$u = S_\alpha (G) \in \dot H^\alpha(\mathbb{R}^d)$ solves the problem in \eqref{main_chang}.\\
If, we have additionally $G\in L_1(\partial\Omega)$, then the pointwise equality
$-(-\Delta)^\alpha \tilde u = 0$ in $(\partial\Omega)^C$ is also satisfied.
\end{theorem}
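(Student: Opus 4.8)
The plan is to reduce \eqref{main_chang} to the boundary integral equation $\gamma S_\alpha G = g$, to solve the latter by the Lax--Milgram theorem using Theorems~\ref{S_thm} and~\ref{coerc_thm}, and then to read the required properties of $u=S_\alpha(G)$ off its defining integral \eqref{S_alpha_def}. First I would note that Theorem~\ref{S_thm} applies with $s=\alpha$: the index conditions hold since $2\alpha-s=\alpha\in(\frac{1}{2},\frac{3}{2})$ and the constraint on $(\alpha,d)$ is exactly the one assumed in the present theorem, so $S_\alpha\colon H^{\frac{1}{2}-\alpha}(\partial\Omega)\to H^\alpha(\Omega)$ is continuous; composing with the trace \eqref{cont_gamma}, which is continuous for the exponent $\alpha\in(\frac{1}{2},\frac{3}{2})$, gives a continuous map $\gamma S_\alpha\colon H^{\frac{1}{2}-\alpha}(\partial\Omega)\to H^{\alpha-\frac{1}{2}}(\partial\Omega)$. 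By Theorem~\ref{coerc_thm} the form $\langle v,\gamma S_\alpha v\rangle_{\frac{1}{2}-\alpha,\alpha-\frac{1}{2}}$ is coercive on $H^{\frac{1}{2}-\alpha}(\partial\Omega)$. Identifying $H^{\alpha-\frac{1}{2}}(\partial\Omega)$ with the dual of $H^{\frac{1}{2}-\alpha}(\partial\Omega)$ through this pairing, Lax--Milgram yields, for every $g\in H^{\alpha-\frac{1}{2}}(\partial\Omega)$, a unique $G\in H^{\frac{1}{2}-\alpha}(\partial\Omega)$ with $\gamma S_\alpha G=g$.

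Next I would check that $u:=S_\alpha(G)$, which by \eqref{S_alpha_def} is defined on all of $\mathbb{R}^d$ and is the $\tilde u$ of \eqref{main_chang}, actually solves that problem. Writing $S_\alpha(G)=R_\alpha\gamma^*G$ as in the proof of Theorem~\ref{S_thm} and using $\phi_\alpha=\mathcal{F}^{-1}|\mathrm{Id}|^{-2\alpha}$, one gets $\mathcal{F}[u](\mathbf r)=|\mathbf r|^{-2\alpha}\mathcal{F}[\gamma^*G](\mathbf r)$, hence
\[
\|u\|_{\dot H^\alpha(\mathbb R^d)}^2=\int_{\mathbb R^d}|\mathbf r|^{-2\alpha}\,|\mathcal{F}[\gamma^*G]|^2(\mathbf r)\,\mathrm d\mathbf r\lesssim\|G\|_{\frac{1}{2}-\alpha,\partial\Omega}^2 ,
\]
the last bound because $\gamma^*$ is adjoint to the continuous trace $\gamma\colon\dot H^\alpha(\mathbb R^d)\to H^{\alpha-\frac{1}{2}}(\partial\Omega)$; thus $u\in\dot H^\alpha(\mathbb R^d)$. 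The boundary condition $\gamma u=\gamma S_\alpha G=g$ holds by construction. Since $\phi_\alpha$ is the fundamental solution, $(-\Delta_{\mathbb R^d})^\alpha u$ coincides, up to the sign convention, with $\gamma^*G$, a distribution supported on $\partial\Omega$, so it pairs to zero against every $\varphi\in C_0^\infty((\partial\Omega)^C)$, i.e. $-(-\Delta)^\alpha u=0$ weakly on $(\partial\Omega)^C$. For the growth condition I would use that for $|\mathbf x|\ge 2\,\mathrm{diam}\,\Omega$ the map $\mathbf y\mapsto\phi_\alpha(\mathbf x-\mathbf y)=c_{\alpha,d}|\mathbf x-\mathbf y|^{2\alpha-d}$ is smooth on the compact set $\partial\Omega$ with all derivatives of size $|\mathbf x|^{2\alpha-d}$, so $\|\phi_\alpha(\mathbf x-\cdot)\|_{H^{\alpha-\frac{1}{2}}(\partial\Omega)}\lesssim|\mathbf x|^{2\alpha-d}$ and therefore $|u(\mathbf x)|=|\langle G,\phi_\alpha(\mathbf x-\cdot)\rangle_{\frac{1}{2}-\alpha,\alpha-\frac{1}{2}}|\lesssim\|G\|_{\frac{1}{2}-\alpha,\partial\Omega}\,|\mathbf x|^{2\alpha-d}$. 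Uniqueness of the whole weak solution $\tilde u$ is Theorem~\ref{thm_chang} for $d=3$; for $d=2$ I would add the energy argument that a difference $w\in\dot H^\alpha(\mathbb R^d)$ of two solutions satisfies $\gamma w=0$ and, tested against itself (which the decay legitimises), $\|w\|_{\dot H^\alpha(\mathbb R^d)}=0$.

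For the final assertion, assume in addition $G\in L_1(\partial\Omega)$. Then $\tilde u(\mathbf x)=\int_{\partial\Omega}\phi_\alpha(\mathbf x-\mathbf y)G(\mathbf y)\,\mathrm d\mathbf y$ is an absolutely convergent integral for every $\mathbf x\notin\partial\Omega$, and since $\phi_\alpha(\cdot-\mathbf y)$ is $C^\infty$ away from $\mathbf y$ with bounds locally uniform in $\mathbf y\in\partial\Omega$, $\tilde u$ is $C^\infty$ on $(\partial\Omega)^C$ and the principal value in \eqref{pw_lap} converges at each $\mathbf x_0\in(\partial\Omega)^C$. Fixing such an $\mathbf x_0$ and a neighbourhood $U$ with $\overline U\cap\partial\Omega=\emptyset$, the same uniform bounds allow dominated convergence and Fubini to interchange the $\mathbf z$-integration and the limit $r\to0^+$ with $\int_{\partial\Omega}(\cdot)\,G(\mathbf y)\,\mathrm d\mathbf y$, giving
\[
-(-\Delta)^\alpha\tilde u(\mathbf x_0)=\int_{\partial\Omega}\Big(-(-\Delta)^\alpha\phi_\alpha(\cdot-\mathbf y)\Big)(\mathbf x_0)\,G(\mathbf y)\,\mathrm d\mathbf y .
\]
Since $\mathbf x_0\ne\mathbf y$ for all $\mathbf y\in\partial\Omega$ and $\phi_\alpha(\cdot-\mathbf y)$ is $\alpha$-harmonic away from its pole, every integrand vanishes, so $-(-\Delta)^\alpha\tilde u(\mathbf x_0)=0$.

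I expect the main obstacle to be this last interchange of limits: one must exhibit majorants for the increments $\phi_\alpha(\mathbf x+\mathbf z-\mathbf y)-\phi_\alpha(\mathbf x-\mathbf y)$, both near $\mathbf z=0$ and for $|\mathbf z|$ large, that are integrable against $|\mathbf z|^{-d-2\alpha}\,\mathrm d\mathbf z$ uniformly for $\mathbf x\in U$ and $\mathbf y\in\partial\Omega$; this uniformity is precisely what legitimises both the convergence of the principal value defining $(-\Delta)^\alpha\tilde u$ and the Fubini/limit exchange. The analogous but much milder point in the second step is the estimate $\|\phi_\alpha(\mathbf x-\cdot)\|_{H^{\alpha-\frac{1}{2}}(\partial\Omega)}\lesssim|\mathbf x|^{2\alpha-d}$ for large $|\mathbf x|$, which I would obtain by differentiating $|\mathbf x-\mathbf y|^{2\alpha-d}$ in $\mathbf y$ over the compact surface $\partial\Omega$.
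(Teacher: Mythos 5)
Your proposal is correct and follows essentially the same route as the paper: existence and uniqueness of $G$ via continuity (Theorem~\ref{S_thm} with $s=\alpha$) plus coercivity (Theorem~\ref{coerc_thm}) and Lax--Milgram, and the pointwise statement via absolute-convergence majorants for the increments of $\phi_\alpha$, Fubini/Tonelli, dominated convergence, and the $\alpha$-harmonicity of $\phi_\alpha$ away from its pole. The only difference is that you supply direct arguments (Fourier computation for $\dot H^\alpha$ membership, support of $\gamma^*G$ for the weak equation, the decay estimate) where the paper simply cites \cite{chang12}.
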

\begin{proof}
Taking the special case $\alpha = s$ in Theorem \ref{S_thm}, we have that
$$
\gamma S_\alpha: H^{\frac{1}{2}-\alpha}(\partial\Omega)
\to H^{\alpha-\frac{1}{2}}(\partial\Omega).
$$
is continuous. Also, according to Theorem \ref{coerc_thm}, $\gamma S_\alpha$ is coercive.
Consequently, this constitutes a bijection between $H^{\frac{1}{2}-\alpha}(\partial\Omega)$
and $H^{\alpha-\frac{1}{2}}(\partial\Omega)$, so that for any given $g\in H^{\alpha-\frac{1}{2}}(\partial\Omega)$, there
is a unique function $G\in H^{\frac{1}{2}-\alpha}(\partial\Omega)$ such that
$$
\gamma S_\alpha (G) = g.
$$
Also, as pointed out in \cite{chang12}, $S_\alpha (G)\in \dot H^\alpha(\mathbb{R}^d)$
and
$$
-(-\Delta_{\mathbb{R}^d})^\alpha  S_\alpha (G) = 0\quad\textrm{on}\; \Omega^C
$$
in weak sense, so that \eqref{main_chang} with the conditions in the present
theorem has a unique solution.


To prove the pointwise identity $-(-\Delta)^\alpha \tilde u = 0$ in $(\partial\Omega)^C$, we use the definition in \eqref{pw_lap} so that
\begin{equation*}
-(-\Delta_{\mathbb{R}^d})^\alpha S_\alpha(G)(\mathbf{x}) =
\lim_{r\to 0^+}
\frac{2^{2\alpha} \Gamma(\frac{d}{2}+\alpha)}{\pi^{\frac{d}{2}}\Gamma(-\alpha)}
\int_{B(\mathbf{0},r)^C} \frac{S_\alpha(G)(\mathbf{x}+\mathbf{z}) - S_\alpha(G)(\mathbf{x})}
{|\mathbf{z}|^{d+2\alpha}}\:\mathrm{d}\mathbf{z}.
\end{equation*}
Therefore, by the definition in \eqref{S_alpha_def}, we have to verify that
\begin{equation}\label{pw_lap2}
\lim_{r\to 0^+}
\int_{B(\mathbf{0},r)^C}
\frac{1}{|\mathbf{z}|^{d+2\alpha}}
\int_{\partial\Omega} (\phi_\alpha(\mathbf{x}+\mathbf{z}-\mathbf{y})- \phi_\alpha(\mathbf{x}-\mathbf{y})) G(\mathbf{y}) \:\mathrm{d}\mathbf{y}\:\mathrm{d}\mathbf{z}
=0.
\end{equation}
We may assume that here $r\le r^*:= \frac{1}{2}d(\mathbf{x},\Omega)$ and note that
$|\mathbf{z}|\lesssim |\mathbf{x}-\mathbf{y}+\mathbf{z}|$ on $B(\mathbf{0},r)^C\setminus B(\mathbf{y}-\mathbf{x}, r^*)$, which,
along with the definition of $\phi_\alpha$, imply that
$$
\begin{aligned}
&\int_{B(\mathbf{0},r)^C}
\frac{\phi_\alpha(\mathbf{x}+\mathbf{z}-\mathbf{y})}{|\mathbf{z}|^{d+2\alpha}}\:\mathrm{d}\mathbf{z}
=
\int_{B(\mathbf{0},r)^C\setminus B(\mathbf{y}-\mathbf{x},r^*)}
\frac{\phi_\alpha(\mathbf{x}-\mathbf{y}+\mathbf{z})}{|\mathbf{z}|^{d+2\alpha}}\:\mathrm{d}\mathbf{z}\\
&\quad+
\int_{B(\mathbf{y}-\mathbf{x},r^*)}
\frac{\phi_\alpha(\mathbf{x}-\mathbf{y}+\mathbf{z})}{|\mathbf{z}|^{d+2\alpha}}\:\mathrm{d}\mathbf{z}\\
&\lesssim
\int_{B(\mathbf{0},r)^C}
\frac{|\mathbf{z}|^{2\alpha-d}}{|\mathbf{z}|^{d+2\alpha}}\:\mathrm{d}\mathbf{z}
+
\int_{B(\mathbf{0},r^*)}
\frac{\phi_\alpha(\mathbf{z}_0)}{|\mathbf{z}_0-(\mathbf{x}-\mathbf{y})|^{d+2\alpha}}\:\mathrm{d}\mathbf{z}_0\\
&\lesssim
\int_r^\infty s^{-2d}s^{d-1}\:\mathrm{d}s
+
\int_0^{r^*} s^{2\alpha-d}s^{d-1}\:\mathrm{d}s
\lesssim K < \infty,
\end{aligned}
$$
which can be used to get the following inequality:
\begin{equation}\label{fin_int}
\begin{aligned}
&\int_{B(\mathbf{0},r)^C}
\frac{|\phi_\alpha(\mathbf{x}+\mathbf{z}-\mathbf{y})- \phi_\alpha(\mathbf{x}-\mathbf{y})|}{|\mathbf{z}|^{d+2\alpha}}
   \:\mathrm{d}\mathbf{z}\:\mathrm{d}\mathbf{y}
\lesssim
K+ \int_r^\infty \phi_\alpha(r^*)
\frac{s^{d-1}}{s^{d+2\alpha}}\:\mathrm{d}s \\
&\lesssim
\tilde K
< \infty,
\end{aligned}
\end{equation}
where $K$ does not depend on $\mathbf{y}$.
Therefore, we also have the following estimate:
$$
\int_{\partial\Omega}
|G(\mathbf{y})|
\int_{B(\mathbf{0},r)^C}
\frac{|\phi_\alpha(\mathbf{x}+\mathbf{z}-\mathbf{y})- \phi_\alpha(\mathbf{x}-\mathbf{y})|}{|\mathbf{z}|^{d+2\alpha}}
   \:\mathrm{d}\mathbf{z}\:\mathrm{d}\mathbf{y}
\lesssim
 \|G\|_{L_1(\Omega)} \tilde K
< \infty,
$$
such that we can apply Tonelli's theorem in \eqref{pw_lap2} together with
\eqref{fin_int} to obtain
\begin{equation}\label{ch_order}
\begin{aligned}
&
\int_{B(\mathbf{0},r)^C}
\frac{1}{|\mathbf{z}|^{d+2\alpha}}
\int_{\partial\Omega} (\phi_\alpha(\mathbf{x}+\mathbf{z}-\mathbf{y})- \phi_\alpha(\mathbf{x}-\mathbf{y})) G(\mathbf{y}) \:\mathrm{d}\mathbf{y}\:\mathrm{d}\mathbf{z} \\
&=
\int_{\partial\Omega}
G(\mathbf{y})
\int_{B(\mathbf{0},r)^C}
\frac{\phi_\alpha(\mathbf{x}+\mathbf{z}-\mathbf{y})- \phi_\alpha(\mathbf{x}-\mathbf{y})}{|\mathbf{z}|^{d+2\alpha}}\:\mathrm{d}\mathbf{z}\:\mathrm{d}\mathbf{y}.
\end{aligned}
\end{equation}
Using the equality  $(-\Delta)^\alpha \phi_\alpha (\mathbf{x}-\mathbf{y}) = 0$, we have that here
$$
\lim_{r\to 0}
G(\mathbf{y})
\int_{B(\mathbf{0},r)^C}
\frac{\phi_\alpha(\mathbf{x}+\mathbf{z}-\mathbf{y})- \phi_\alpha(\mathbf{x}-\mathbf{y})}{|\mathbf{z}|^{d+2\alpha}}\:\mathrm{d}\mathbf{y}\:\mathrm{d}\mathbf{z}
= 0
$$
and by \eqref{fin_int}, we also obtain that
$$
\left| G(\mathbf{y})
\int_{B(\mathbf{0},r)^C}
\frac{\phi_\alpha(\mathbf{x}+\mathbf{z}-\mathbf{y})- \phi_\alpha(\mathbf{x}-\mathbf{y})}{|\mathbf{z}|^{d+2\alpha}}\:\mathrm{d}\mathbf{z}\right|
\lesssim
 \tilde K_1 | G(\mathbf{y})|
$$
such that we can use the dominated convergence theorem in \eqref{ch_order}
to obtain the desired equality in \eqref{pw_lap2}.
\end{proof}




\section*{Acknowledgments}
The project has been supported by the European Union, co-financed by the European Social Fund (EFOP-3.6.3-VEKOP-16-2017-00002).
This work was completed in the ELTE Institutional Excellence Program
(1783-3/2018/FEKUTSRAT) supported by the Hungarian Ministry of Human Capacities.


\bibliography{ferenc_bib}
\bibliographystyle{abbrv}

\end{document}